\documentclass[reqno]{amsart}
\usepackage[margin=1in]{geometry}
\usepackage[english]{babel}
\usepackage{changepage}
\usepackage{enumitem}
\usepackage{amsmath, amsthm, amssymb, amsfonts,tikz,mathtools,comment}
\usepackage{multicol}
\usepackage{float}
\newtheorem{theorem}{Theorem}[section]
\newtheorem{corollary}[theorem]{Corollary}
\newtheorem{lemma}[theorem]{Lemma}
\newtheorem{conjecture}[theorem]{Conjecture}

\theoremstyle{definition}

\usepackage{xcolor}

\title{Nordhaus--Gaddum Inequalities for Dominating-Set Counts in Bipartite Graphs}
\author{T. N. Sanh}
\address{Department of Computer Science, Princeton University}
\email{tai.sanh.n@princeton.edu}
\date{} 

\begin{document}

\maketitle

\begin{abstract}
    A dominating set in a graph $G$ is a subset $S$ of its vertices such that each vertex in $G$ is either in $S$ or adjacent to a vertex in $S$. Nordhaus--Gaddum inequalities relate the values of a graph parameter on a graph and its complement. In this setting, Keough and Shane conjecture that any graph $G$ on $n$ vertices satisfies $\partial(G) + \partial(\bar{G}) \leq 2(2^{\lfloor n/2 \rfloor} - 1)(2^{\lceil n/2 \rceil} - 1) + 2$, where $\partial(G)$ is the number of dominating sets in $G$. We partially resolve this conjecture for the bipartite case by proving the stronger bound: for a bipartite graph $G$ with nonempty bipartition $(A,B)$, it holds that $\partial(G) + \partial(\bar{G}) \leq 2(2^{|A|} - 1)(2^{|B|} - 1) + 2$. We also characterize the bipartite graphs for which equality holds.
\end{abstract}

\section{Introduction}

For a graph $G$, we call a subset of its vertices $S$ {\sl dominating} if all vertices in $G$ are either in $S$ or adjacent to a vertex in $S$. Denote by $\partial(G)$ the number of dominating sets in $G$. Nordhaus--Gaddum inequalities relate the values of a graph parameter
on a graph and its complement. The original Nordhaus--Gaddum inequalities established bounds on the sum and product of the chromatic numbers of a graph and its complement~\cite{NordhausGaddum}. Since then, analogous inequalities have been studied for a wide variety of graph parameters. Recent work has also considered enumerative versions of such problems, in which the parameter counts a family of graph substructures. For example, Bal, Cutler, and Pebody~\cite{BalCutlerPebody} studied Nordhaus--Gaddum inequalities for the number of cliques in a graph.

The number of dominating sets provides another natural counting parameter in this setting. For this parameter, Wagner~\cite{Wagner} showed that every graph $G$ on $n$ vertices satisfies
$$
\partial(G) + \partial(\bar{G}) \geq 2^n.
$$
Keough and Shane subsequently established the following upper bound.
\begin{theorem}[\cite{KeoughShane}]
    For a graph $G$ on $n$ vertices, it holds that
    \begin{equation*}
        \partial(G) + \partial(\bar{G}) \leq 2^{n+1} - 2^{\lfloor n/2 \rfloor} - 2^{\lceil n/2 \rceil - 1}.
    \end{equation*}
\end{theorem}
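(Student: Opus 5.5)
The plan is to pass to complements and count \emph{non}-dominating sets. Write $\nu(H) = 2^{n} - \partial(H)$ for the number of subsets of $V$ that are not dominating in a graph $H$ on vertex set $V$, $|V| = n$. Then
\[
\partial(G) + \partial(\bar{G}) = 2^{n+1} - \nu(G) - \nu(\bar{G}),
\]
so the theorem is equivalent to the lower bound $\nu(G) + \nu(\bar{G}) \geq 2^{\lfloor n/2 \rfloor} + 2^{\lceil n/2 \rceil - 1}$. I will exhibit enough non-dominating sets using a single vertex.

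Fix any vertex $v$ and let $d = \deg_G(v)$. A set $S$ fails to dominate $v$ in $G$ exactly when $v \notin S$ and $S$ contains no neighbor of $v$, that is, when $S \subseteq V \setminus N_G[v]$. Since $|V \setminus N_G[v]| = n - 1 - d$, this yields $2^{n-1-d}$ non-dominating sets of $G$. Likewise, in $\bar{G}$ we have $V \setminus N_{\bar{G}}[v] = N_G(v)$, so every $S \subseteq N_G(v)$ fails to dominate $v$ in $\bar{G}$. This yields $2^{d}$ non-dominating sets of $\bar{G}$. The two families are counted in different graphs, so no overlap issue arises (the empty set is simply counted once in each). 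Hence
\[
\nu(G) + \nu(\bar{G}) \geq 2^{d} + 2^{n-1-d}.
\]

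It remains to minimize $2^{a} + 2^{b}$ over nonnegative integers with $a + b = n - 1$. By convexity of $x \mapsto 2^x$, or by directly comparing $2^{a}+2^{b}$ with $2^{a+1}+2^{b-1}$ when $a<b-1$, the minimum occurs at the balanced split $\{a,b\} = \{\lfloor (n-1)/2 \rfloor, \lceil (n-1)/2 \rceil\}$. A parity check then finishes the argument:
\begin{itemize}
\item For $n$ even, the minimum is $2^{n/2-1} + 2^{n/2} = 2^{\lfloor n/2 \rfloor} + 2^{\lceil n/2 \rceil - 1}$.
\item For $n$ odd, the minimum is $2 \cdot 2^{(n-1)/2}$, and this equals the same expression because $\lfloor n/2 \rfloor = \lceil n/2 \rceil - 1 = (n-1)/2$.
\end{itemize}

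There is no serious obstacle. The only point needing care is the observation that the sets inside $N_G(v)$ are exactly those that miss $v$'s closed neighborhood in the complement, and that this gives a bound valid for every $v$ regardless of $d$. The final integer optimization and parity bookkeeping are routine.
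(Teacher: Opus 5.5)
Your proof is correct. The sets $S\subseteq V\setminus N_G[v]$ leave $v$ undominated in $G$, and the sets $S\subseteq N_G(v)$ leave $v$ undominated in $\bar{G}$. So $\nu(G)+\nu(\bar{G})\ge 2^{d}+2^{n-1-d}$, and your balancing and parity check give exactly the stated bound. Fixing $v$ needs $n\ge 1$, but the statement itself fails for the null graph, so that is the intended range.

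The paper does not prove this theorem; it only quotes it from Keough and Shane as background. There is therefore no internal proof to compare with, and the useful comparison is with the framework the paper uses for its own result.

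Since $\partial(G)+\partial(\bar{G})=2^n+|\Upsilon(G,\bar{G})|$, you have $\nu(G)+\nu(\bar{G})=2^n-|\Upsilon(G,\bar{G})|$. Your argument thus amounts to exhibiting $2^{d}+2^{n-1-d}$ distinct subsets outside $\Upsilon(G,\bar{G})$, each failing at the single vertex $v$ (either in $G$, or via its complement in $\bar{G}$).

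This purely local witness is what lets your argument apply to every graph. In the worst case ($d\approx (n-1)/2$), however, it guarantees a deficit below $2^{n+1}$ of only $2^{\lfloor n/2\rfloor}+2^{\lceil n/2\rceil-1}$. Conjecture~\ref{con:main} requires a deficit of $2^{\lfloor n/2\rfloor+1}+2^{\lceil n/2\rceil+1}-4$.

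The paper obtains that larger deficit in the bipartite case from global structure:
a proper subset of one side never dominates $G$;
both sides are cliques in $\bar{G}$;
and the injections $\phi_a$ charge the exceptional families $\mathcal{L}$ and $\mathcal{R}$ against non-dominating pairs in $\mathcal{P}\setminus\mathcal{I}$.
A one-vertex count cannot see any of these.
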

\noindent They note that this bound is not expected to be tight and conjecture the following stronger upper bound on any graph $G$.

\begin{conjecture}[\cite{KeoughShane}]\label{con:main}
    For a graph $G$ on $n$ vertices, it holds that
    \begin{equation*}
        \partial(G) + \partial(\bar{G}) \leq 2(2^{\lfloor n/2 \rfloor} - 1)(2^{\lceil n/2 \rceil} - 1) + 2.
    \end{equation*}
\end{conjecture}

\noindent Instead of fully resolving the conjecture, we prove it for the bipartite case. In fact, we show a stronger bound with the following theorem.
\begin{theorem}\label{thm:main}
    For a bipartite graph $G$ with nonempty bipartition $(A,B)$, it holds that
    \begin{equation*}
        \partial(G) + \partial(\bar{G}) \leq 2(2^{|A|} - 1)(2^{|B|} - 1) + 2.
    \end{equation*}
\end{theorem}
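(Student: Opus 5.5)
Write $a=|A|$, $b=|B|$, $n=a+b$, and let $\nu(H)=2^n-\partial(H)$ be the number of non-dominating subsets of $V$ in $H$. Since $2(2^a-1)(2^b-1)+2=2^{n+1}-2^{a+1}-2^{b+1}+4$, Theorem~\ref{thm:main} is equivalent to
\begin{equation*}
\nu(G)+\nu(\bar G)\;\geq\; 2^{a+1}+2^{b+1}-4 .
\end{equation*}
My plan is to exhibit this many (set, graph) pairs $(X,H)$, with $H\in\{G,\bar G\}$ and $X$ non-dominating in $H$. I would build them from explicit families attached to subsets of $A$ and of $B$, and check that the families are disjoint.

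\textbf{Structural facts.} First, in $\bar G$ both $A$ and $B$ are cliques. Hence any $X$ meeting both $A$ and $B$ dominates $\bar G$. So the only non-dominating sets of $\bar G$ lie inside $A$ or inside $B$, and a nonempty $S\subseteq A$ fails in $\bar G$ exactly when some $v\in B$ is $G$-adjacent to all of $S$. Second, in $G$ the sides are independent. So every $S\subsetneq A$ and every $R\subsetneq B$ is non-dominating in $G$. Together with $\emptyset$ failing in both graphs, this already gives $(2^a-1)+(2^b-1)-1+1=2^a+2^b-2$ pairs. We still need roughly as many again.

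\textbf{Second copy.} For nonempty $S\subsetneq A$, put $T_S=B\setminus N_G(A\setminus S)$ and $X_S=S\cup T_S$. Any $u\in A\setminus S$ has all its $G$-neighbours in $N_G(A\setminus S)$, hence none in $T_S$, and $u\notin X_S$. So $X_S$ is non-dominating in $G$. When $T_S\neq\emptyset$, $X_S$ meets both sides, is new, and $S=X_S\cap A$ recovers $S$, so the map is injective. Symmetrically, for nonempty $R\subsetneq B$ set $Y_R=R\cup(A\setminus N_G(B\setminus R))$. This yields up to $(2^a-2)+(2^b-2)$ further pairs. Adding the pair $(\emptyset,\cdot)$ already counted and the cases $S=A$ or $R=B$ (one checks $A$ or $B$ fails in $G$ or in $\bar G$ unless $G$ has no isolated vertices), the total is the target $2^{a+1}+2^{b+1}-4$.

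\textbf{Main obstacle.} The difficulty is the bookkeeping, in two places. First, $T_S$ may be empty, i.e.\ $N_G(A\setminus S)=B$. In that case I would substitute the pair $(S,\bar G)$, or another mixed set such as $S\cup(B\setminus N_G(u))$ for a chosen $u\in A\setminus S$; I must show one of these is available and not already used, which is where bipartite structure is essential. Second, an $X_S$ may coincide with a $Y_R$. A coincidence forces $S=A\setminus N_G(B\setminus R)$ and $R=B\setminus N_G(A\setminus S)$, a Galois-type fixed point. For such collisions I expect to compensate with distinct non-dominating sets, e.g.\ the same set missing a further vertex, or the pair $(S,\bar G)$.

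If this direct injection becomes too tangled, my fallback is induction on $n$: delete a vertex $v\in A$ with $a\ge2$ and split the sets of $G$ and $\bar G$ according to whether they contain $v$. The base case $a=b=1$ is a direct check.
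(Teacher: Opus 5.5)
Your reduction to $\nu(G)+\nu(\bar G)\ge 2^{a+1}+2^{b+1}-4$ is correct, and so is your first batch of $2^a+2^b-2$ pairs. The second batch of $2^a+2^b-2$ pairs is the whole content of the theorem, and the recipe you give does not produce it. The deferred ``bookkeeping'' is exactly where the argument is missing.

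First, the parenthetical claim that $S=A$ and $R=B$ supply the last two pairs is false. $(A,G)$ fails only if some vertex of $B$ is isolated, and $(A,\bar G)$ fails only if some vertex of $B$ is $G$-complete to $A$. In $G=2K_2$, which is an equality case with no slack, neither happens for $A$ or $B$. Moreover $Y_{\{b_1\}}=X_{\{a_1\}}=\{a_1,b_1\}$ and $Y_{\{b_2\}}=X_{\{a_2\}}$. So your families give $6+2=8$ of the $12$ required pairs. The missing ones are the four pairs $(\{v\},\bar G)$, which your scheme never produces systematically.

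Second, your substitutes for $T_S=\emptyset$ can both fail. Take $A=\{a_1,a_2,a_3\}$, $B=\{b_1,b_2\}$, $E(G)=\{a_1b_1,a_2b_2,a_3b_1,a_3b_2\}$ and $S=\{a_1,a_2\}$. Then $T_S=\emptyset$. No vertex of $B$ is adjacent to both $a_1$ and $a_2$, so $S$ dominates $\bar G$. The only $u\in A\setminus S$ is $a_3$, with $N_G(a_3)=B$, so $S\cup(B\setminus N_G(u))=S$ is already counted. With the choices you specify, the second batch for this graph reaches only $8$ of the needed $10$.

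The induction fallback is only named. The case $|A|=1$ is also not addressed; there the $X_S$ family is vacuous and every subset must be accounted for.

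The missing idea is to index the second batch not by mixed sets but by the $2^a+2^b-2$ sets $A\cup R$ ($R\subsetneq B$) and $L\cup B$ ($L\subsetneq A$). For each such set, either it fails to dominate $G$, or its complement ($B\setminus R$, resp.\ $A\setminus L$) fails to dominate $\bar G$, unless the set lies in $\Upsilon(G,\bar G)$. The resulting pairs are pairwise distinct and disjoint from your first batch, so they come for free.

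Only the exceptional sets, the families $\mathcal{R}$ and $\mathcal{L}$, need compensation, and the paper injects them into non-dominating mixed sets. For $R\in\mathcal{R}$ and any $a\in A$, the set $\{a\}\cup R$ fails to dominate $G$: $B\setminus R$ dominates $\bar G$, so it contains a $G$-non-neighbour of $a$. For $R=\emptyset$, use $\{a\}\cup(B\setminus\{b_a\})$ with $b_a$ such a non-neighbour. Images for different $a$ are disjoint, giving $|A||\mathcal{R}|$ such sets, and symmetrically $|B||\mathcal{L}|$. When $|A|,|B|\ge 2$, $\max\{|A||\mathcal{R}|,|B||\mathcal{L}|\}\ge|\mathcal{L}|+|\mathcal{R}|$. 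The case $|A|=1$ is settled by showing directly that $\Upsilon(G,\bar G)=\emptyset$.

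In $2K_2$ this fills the hole exactly: the four pairs $(\{v\},\bar G)$ are the free pairs, and $\mathcal{R}=\mathcal{L}=\{\emptyset\}$ is compensated by $\{a_1,b_1\}$ and $\{a_2,b_2\}$.
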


\noindent Observe that the right-hand side is maximized when both parts are as balanced as possible. That is, if $n = |A| + |B|$, then we have
\begin{align*}
    \partial(G) + \partial(\bar{G}) & \leq 2(2^{|A|} - 1)(2^{|B|} - 1) + 2                                \\
                                & \leq 2(2^{\lfloor n/2 \rfloor} - 1)(2^{\lceil n/2 \rceil} - 1) + 2,
\end{align*}
\noindent which is exactly the bound in Conjecture~\ref{con:main}. We
formalize this claim in Corollary~\ref{cor:conj}.

\section{The Bipartite Case}

For any graph $G$ on $n$ vertices, Wagner~\cite{Wagner} showed that
$\partial(G) + \partial(\bar{G}) \geq 2^n$. This lower bound follows from the fact that
for any $S \subseteq V(G)$, we have that $S$ dominates $G$ or $\bar{S}$ dominates $\bar{G}$,
where $\bar{S}=V(G)\setminus S$. 
With this, Keough and Shane~\cite{KeoughShane} express the number of dominating sets
in $G$ and $\bar{G}$ as
\begin{equation}\label{ineq:one}
    \partial(G) + \partial(\bar{G}) = 2^n + |\Upsilon(G, \bar{G})|,
\end{equation}
where they define
\begin{equation*}
    \Upsilon(G, \bar{G}) = \{ S \subseteq V(G) : S \text{ dominates } G \text{ and } \bar{S} \text{ dominates } \bar{G} \}.
\end{equation*}

\noindent Let us now consider only bipartite $G$ with bipartition $(A,B)$ and $|A|, |B| \geq 2$.
We will later address the case where $|A| = 1$ or $|B| = 1$. To prove Theorem~\ref{thm:main}, we then must show
\begin{equation*}
    |\Upsilon(G, \bar{G})| \leq (2^{|A|} - 2)(2^{|B|} - 2).
\end{equation*}

\noindent Let us define $\mathcal{P} = \left\{ (X,Y) : \emptyset \neq X \subsetneq A,\ \emptyset \neq Y \subsetneq B \right\}$.
Then $|\mathcal{P}| = (2^{|A|}-2)(2^{|B|}-2)$.
Let \(\mathcal{I}\subseteq\mathcal{P}\) consist of those pairs
\((X,Y)\) such that \(X\cup Y\) dominates \(G\).
We first identify the members of \(\Upsilon(G,\bar{G})\) that
contain neither \(A\) nor \(B\) entirely. For each
\(S\subseteq V(G)\), we write
\[
    X=S\cap A
    \ \ \text{and}\ \
    Y=S\cap B.
\]
Suppose that \(S\in\Upsilon(G,\bar{G})\), with
\(X\subsetneq A\) and \(Y\subsetneq B\). Since \(S\) dominates \(G\),
both \(X\) and \(Y\) must be nonempty. Indeed, if \(X=\emptyset\),
then every vertex of \(B\setminus Y\) is undominated, since \(G\)
has no edges within \(B\). The case \(Y=\emptyset\) is symmetric.
Thus \((X,Y)\in\mathcal{P}\), and thus \((X,Y)\in\mathcal{I}\).
Conversely, suppose that \((X,Y)\in\mathcal{I}\). Then \(X\cup Y\)
dominates \(G\). Moreover, it follows that
$$
    \overline{X\cup Y} = (A\setminus X)\cup(B\setminus Y)
$$
intersects both \(A\) and \(B\) since $X \subsetneq A$ and $Y \subsetneq B$. Since both \(A\) and \(B\) induce
cliques in \(\bar{G}\), this complement dominates
\(\bar{G}\). Therefore, we have \(X\cup Y\in\Upsilon(G,\bar{G})\).
It remains to account for the members of \(\Upsilon(G,\bar{G})\)
that contain one part of the bipartition entirely. Let us define
$$
    \mathcal{L}
    =
    \left\{
    L\subsetneq A :
    L\cup B\in\Upsilon(G,\bar{G})
    \right\}
    \ \ \text{and} \ \
    \mathcal{R}
    =
    \left\{
    R\subsetneq B :
    A\cup R\in\Upsilon(G,\bar{G})
    \right\}.
$$

\noindent Every member of $\Upsilon(G,\bar{G})$ corresponds uniquely either
to a pair in $\mathcal I$, to an element $L\in\mathcal L$ via
$L\mapsto L\cup B$, or to an element $R\in\mathcal R$ via
$R\mapsto A\cup R$. The only remaining possibility, $X=A$ and $Y=B$,
gives $S=A\cup B=V(G)$, which is not in $\Upsilon(G,\bar{G})$ since
$\overline{A \cup B} = \emptyset$ cannot dominate nonempty $\bar{G}$.
Thus we have
\begin{equation}\label{ineq:decomposition}
    |\Upsilon(G,\bar{G})|
    =
    |\mathcal{I}|+|\mathcal{L}|+|\mathcal{R}|.
\end{equation}

\medskip
\begin{lemma}\label{lem:main}
    Assume $|A|, |B| \geq 2$. It holds that
    $|\mathcal{P}| - |\mathcal{I}| \geq |\mathcal{L}| + |\mathcal{R}|$.
\end{lemma}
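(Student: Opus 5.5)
The plan is to prove Lemma~\ref{lem:main} by an explicit injection
\[
\varphi:\ \mathcal{L}\sqcup\mathcal{R}\longrightarrow \mathcal{P}\setminus\mathcal{I},
\]
which gives the inequality at once. The first step is to put $\mathcal{L}$ and $\mathcal{R}$ in combinatorial form. For $L\subsetneq A$ write $U=A\setminus L\neq\emptyset$. The set $L\cup B$ dominates $G$ exactly when no vertex of $U$ is isolated in $G$. Its complement $U$ dominates $\bar G$ exactly when every $b\in B$ has a non-neighbour (in $G$) inside $U$, because $U$ is a clique in $\bar G$ and so already dominates $A$. Hence $L\in\mathcal{L}$ if and only if $U$ is nonempty, contains no isolated vertex of $G$, and no $b\in B$ satisfies $U\subseteq N(b)$. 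The description of $\mathcal{R}$ is symmetric, with $W=B\setminus R$. For $\mathcal{P}\setminus\mathcal{I}$ I will use the following certificate: $(X,Y)\in\mathcal{P}$ is non-dominating as soon as some $a\in A\setminus X$ has $N(a)\cap Y=\emptyset$, or some $b\in B\setminus Y$ has $N(b)\cap X=\emptyset$.

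Next I will define $\varphi$ so that $\mathcal{L}$-elements are sent to pairs whose failure is witnessed in $A$, and $\mathcal{R}$-elements to pairs whose failure is witnessed in $B$. This separation is what keeps the two images disjoint.

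For $L\in\mathcal{L}$ with $L\neq\emptyset$, I first try $\varphi(L)=(L,\,B\setminus N(U))$. Every vertex of $U=A\setminus L$ is then undominated, and $L$ is recoverable from the first coordinate, so this part of $\varphi$ is injective. The pair is admissible precisely when $\emptyset\neq B\setminus N(U)$; properness holds because $U$ has no isolated vertex, so $N(U)\neq\emptyset$. When $N(U)=B$, or when $L=\emptyset$ (that is, $U=A$), I need a fallback. Here I will use a fixed $a\in U$ together with a vertex $b$ that has a non-neighbour in $U$ (guaranteed by the defining property of $\mathcal{L}$). The fallback image is $(\{a'\},\,Y)$ with $Y\subseteq B\setminus N(a)$ chosen canonically. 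I will also use $|A|,|B|\ge 2$ to ensure that these fallback pairs lie in $\mathcal{P}$.

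For $\mathcal{R}$ I will apply the mirror construction with the roles of $A$ and $B$ swapped. To make the images of $\mathcal{L}$ and $\mathcal{R}$ disjoint even when a pair is non-dominating on both sides, I will impose a tie-breaking convention: an $\mathcal{L}$-image has \emph{every} vertex of $A\setminus X$ undominated, whereas an $\mathcal{R}$-image has some vertex of $A\setminus X$ dominated. If the overlap cannot be avoided this way, the fallback is to compare the sizes of the two families separately, against the disjoint subfamilies of $\mathcal{P}\setminus\mathcal{I}$ defined by ``all of $A\setminus X$ undominated'' and its complement.

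I expect the main obstacle to be the degenerate cases: $L=\emptyset$ or $R=\emptyset$, and situations where $N(U)=B$. In these cases the natural image pair leaves $\mathcal{P}$ because one coordinate becomes empty or full. The fallback images must then stay injective and must not collide with the main images. I would handle this by first bounding the number of such degenerate elements, which is at most a few per side: for example, $L=\emptyset$ contributes at most one element. I would then exhibit that many spare non-dominating pairs explicitly. A natural source is pairs $(\{a\},Y)$ where $a$ is a vertex of minimum degree and $Y$ avoids its neighbourhood.
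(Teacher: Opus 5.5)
Your proposal has a genuine gap: it is not yet a proof. Two of the steps it relies on fail, and the fallbacks are asserted rather than shown.

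\textbf{The degenerate case is not small.} Your characterization of $\mathcal{L}$ is correct, and so is the main map $L\mapsto(L,B\setminus N(U))$ where it applies. But the claim that the degenerate elements are ``at most a few per side'' is false. The condition $N(A\setminus L)=B$ can hold for exponentially many $L\in\mathcal{L}$. Take $A=\{a_1,a_2,a_3,a_4\}$ and $B=\{b_1,b_2\}$ with edges $a_1b_1,a_3b_1,a_2b_2,a_4b_2$. Here $L\in\mathcal{L}$ if and only if $U=A\setminus L$ meets both $\{a_1,a_3\}$ and $\{a_2,a_4\}$. So $|\mathcal{L}|=9$, and every one of these $L$ has $N(U)=B$, so your main map applies to none of them. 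Your fallback images all have the form $(\{a'\},Y)$. The family $\mathcal{P}$ contains only $|A|(2^{|B|}-2)=8$ such pairs, so no injection of this type exists.

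\textbf{The $\mathcal{L}$- and $\mathcal{R}$-images are not disjoint.} In $G=3K_2$ with edges $a_ib_i$, we have $\{a_3\}\in\mathcal{L}$ and $\{b_3\}\in\mathcal{R}$. Your main map sends $\{a_3\}$ to $(\{a_3\},\{b_3\})$, and its mirror sends $\{b_3\}$ to the same pair. This $\mathcal{R}$-image leaves all of $A\setminus X=\{a_1,a_2\}$ undominated, so the mirror construction violates your tie-breaking convention. Your alternative of comparing $|\mathcal{L}|$ and $|\mathcal{R}|$ separately against the two subfamilies is stated but not proved.

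\textbf{How the paper avoids both problems.} The paper never builds a joint injection of $\mathcal{L}\sqcup\mathcal{R}$.
\begin{itemize}
\item Fix $a\in A$ and map $R\mapsto(\{a\},R)$ for $R\neq\emptyset$. This pair is never dominating: since $B\setminus R$ dominates $\bar G$, $a$ has a non-neighbour in $B\setminus R$, and that vertex is undominated. The only degenerate element is $R=\emptyset$, which is sent to $(\{a\},B\setminus\{b_a\})$; the paper shows this cannot collide with another image.
\item Images for different $a$ are disjoint by their first coordinate. This gives $|\mathcal{P}|-|\mathcal{I}|\ge|A||\mathcal{R}|$, and symmetrically $|\mathcal{P}|-|\mathcal{I}|\ge|B||\mathcal{L}|$.
\item Finally, $\max\{|A||\mathcal{R}|,|B||\mathcal{L}|\}\ge|\mathcal{L}|+|\mathcal{R}|$ because $(|A|-1)(|B|-1)\ge 1$.
\end{itemize}
Carrying $R$ in the second coordinate, with a singleton first coordinate, removes the $N(U)=B$ degeneracy. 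The max argument then removes any need for the two families of images to be disjoint.
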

\begin{proof}
    Note that $\mathcal{P} \setminus \mathcal{I}$ is the set of pairs in $\mathcal{P}$ that do not
    dominate $G$. We first show that $|\mathcal{P}| - |\mathcal{I}| \geq |A| |\mathcal{R}|$.
    Fix $a \in A$. Let us construct an injection from $\mathcal{R}$ to
    $\mathcal{P} \setminus \mathcal{I}$ whose $A$-coordinate is $\{a\}$.
    For every nonempty $R \in \mathcal{R}$, let us define
    \begin{equation*}
        \phi_a(R) = (\{a\}, R).
    \end{equation*}
    
    \noindent Since $\emptyset \neq R \subsetneq B$ and $\{a\} \subsetneq A$, it follows that
    $\phi_a(R) \in \mathcal{P}$. Let us now show that $\phi_a(R) \notin \mathcal{I}$.
    By definition of $\mathcal{R}$, we have
    $S = A \cup R \in \Upsilon(G, \bar{G})$. By definition of
    $\Upsilon(G, \bar{G})$, it follows that
    $\bar{S} = B \setminus R$ dominates $\bar{G}$. Since
    $a \notin B \setminus R$, it follows that there exists some
    $b \in B \setminus R$ such that $ab \in E(\bar{G})$.
    But that means $ab \notin E(G)$. So the set $\{a\} \cup R$ cannot dominate $G$, since
    $b \notin R$, $b$ is not adjacent to any vertex in $R$ due to bipartiteness, and
    $a$ is not adjacent to $b$ in $G$. Thus $\phi_a(R) \notin \mathcal{I}$.

    \noindent It remains to define $\phi_a(\emptyset)$ if $\emptyset \in \mathcal{R}$.
    In this case, $A \in \Upsilon(G,\bar{G})$, so $B$ dominates $\bar{G}$.
    Since $a \notin B$, there exists some $b_a \in B$ such that
    $ab_a \in E(\bar{G})$. Define
    \begin{equation*}
        \phi_a(\emptyset) = (\{a\}, B \setminus \{b_a\}).
    \end{equation*}
    
    \noindent Since $|A|,|B| \geq 2$, it follows that
    $\phi_a(\emptyset) \in \mathcal{P}$. Moreover, $b_a$ is not dominated by
    $\{a\} \cup (B \setminus \{b_a\})$ in $G$, so
    $\phi_a(\emptyset) \notin \mathcal{I}$.

    \noindent We must also show that this image does not coincide with the image of a nonempty
    member of $\mathcal{R}$. Suppose otherwise that
    $B \setminus \{b_a\} \in \mathcal{R}$. Then
    $A \cup (B \setminus \{b_a\}) \in \Upsilon(G,\bar{G})$, so
    $\{b_a\}$ dominates $\bar{G}$. Thus $b_a$ is adjacent in $\bar{G}$ to every
    vertex of $A$, and thus has no neighbor in $A$ in $G$. However,
    $A \cup (B \setminus \{b_a\})$ dominates $G$, so $b_a$
    must have a neighbor in $A$. Contradiction! Therefore, $\phi_a$ is injective.

    \noindent The images for different $a$'s are disjoint because their first coordinates differ.
    Therefore, we have
    $|\mathcal{P}| - |\mathcal{I}| \geq |A| |\mathcal{R}|$.
    By symmetry, it holds that
    $|\mathcal{P}| - |\mathcal{I}| \geq |B| |\mathcal{L}|$. Then we have that
    \[
        |\mathcal{P}| - |\mathcal{I}|
        \geq
        \max\{ |B||\mathcal{L}|, |A||\mathcal{R}| \}.
    \]
    It remains to show that
    \[
        \max\{ |B||\mathcal{L}|, |A||\mathcal{R}| \}
        \geq
        |\mathcal{L}| + |\mathcal{R}|.
    \]
    If $|\mathcal{L}|=0$ or $|\mathcal{R}|=0$, the inequality follows immediately.
    Thus, assume that $|\mathcal{L}|,|\mathcal{R}|>0$. Suppose otherwise that
    $|B||\mathcal{L}| < |\mathcal{L}| + |\mathcal{R}|$ and
    $|A||\mathcal{R}| < |\mathcal{L}| + |\mathcal{R}|$.
    Then we have that
    $$
    |\mathcal{L}|(|B| - 1) < |\mathcal{R}| \ \ \text{and} \ \    
    |\mathcal{R}|(|A| - 1) < |\mathcal{L}|.$$
    
    \noindent Multiplying gives
    \[
        |\mathcal{L}||\mathcal{R}|(|A| - 1)(|B| - 1)
        <
        |\mathcal{L}||\mathcal{R}|,
    \]
    and thus
    \[
        (|A| - 1)(|B| - 1) < 1.
    \]
    
    \noindent But we assumed that $|A|, |B| \geq 2$. Contradiction!
\end{proof}

\noindent We now present the proof of Theorem~\ref{thm:main}.

\begin{proof}[Proof of Theorem~\ref{thm:main}]
    First assume $|A|, |B| \geq 2$.
    Using Lemma~\ref{lem:main} and (\ref{ineq:decomposition}), we get that
    $$
        |\Upsilon(G, \bar{G})| \leq |\mathcal{P}| = (2^{|A|} - 2)(2^{|B|} - 2),
    $$
    as desired. For $|A| = 1$, we claim that $\Upsilon(G, \bar{G}) = \emptyset$. Indeed, suppose
    otherwise that there exists $S \in \Upsilon(G, \bar{G})$. Let $A = \{a\}$. Then we have two cases.
    \begin{adjustwidth}{2em}{0pt}
    \textbf{Case 1.} \(a \in S\).
    By definition, \(\bar{S}\) dominates \(\bar{G}\). But \(a \notin \bar{S}\), so there must exist some \(b \in \bar{S}\)
    such that \(ab \in E(\bar{G})\). But that means \(ab \notin E(G)\). So \(S\) cannot dominate \(G\), since
    \(b \notin S\), \(b\) is not adjacent to any vertices in \(B\) due to bipartiteness, and \(a\) is not adjacent to \(b\)
    in \(G\). But we assumed \(S \in \Upsilon(G,\bar{G})\), which means \(S\) must dominate \(G\). Contradiction!
    
    \medskip
    
    \noindent \textbf{Case 2.} \(a \notin S\).
    Since $S$ dominates $G$, we must have $S = B$. Indeed, if there exists some $b \in B \setminus S$, then $b$ is not adjacent to any vertices in $S \subseteq B$ due to bipartiteness, so $b$ is not dominated by $S$. Since $a \notin S$, there must exist some $b \in S$ such that $ab \in E(G)$. But $\bar{S} = \{a\}$ dominates $\bar{G}$. Since $b \notin \bar{S}$, it follows that $ab \in E(\bar{G})$. But that means $ab \notin E(G)$. Contradiction!
    \end{adjustwidth}

    \noindent Therefore, for $|A| = 1$, we get $|\Upsilon(G, \bar{G})| = 0 \leq |\mathcal{P}| = (2^{|A|} - 2)(2^{|B|} - 2) = 0$.
    The case for $|B| = 1$ is symmetric. Finally, we plug into (\ref{ineq:one}), giving us
    \begin{align*}
        \partial(G) + \partial(\bar{G}) & = 2^n + |\Upsilon(G, \bar{G})|        \\
                                    & \leq 2^n + (2^{|A|} - 2)(2^{|B|} - 2) \\
                                    & = 2(2^{|A|} - 1)(2^{|B|} - 1) + 2.
    \end{align*}
\end{proof}

\noindent Now let us show that Theorem~\ref{thm:main} partially resolves Conjecture~\ref{con:main} for the bipartite case.

\begin{corollary}\label{cor:conj}
    For a bipartite graph $G$ with nonempty bipartition $(A,B)$
    and $n = |A| + |B|$, it holds that
    $$
        \partial(G) + \partial(\bar{G}) \leq 2(2^{\lfloor n/2 \rfloor} - 1)(2^{\lceil n/2 \rceil} - 1) + 2.
    $$
\end{corollary}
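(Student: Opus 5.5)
The plan is to apply Theorem~\ref{thm:main} and then show that its right-hand side is largest when the bipartition is balanced. Write $k=|A|$, so $|B|=n-k$ with $1\le k\le n-1$. Theorem~\ref{thm:main} gives
\[
\partial(G)+\partial(\bar G)\le 2(2^{k}-1)(2^{n-k}-1)+2,
\]
so it suffices to prove that $f(k)=(2^k-1)(2^{n-k}-1)$ satisfies $f(k)\le f(\lfloor n/2\rfloor)$ for every integer $k$ with $1\le k\le n-1$. Note that $\lfloor n/2\rfloor+\lceil n/2\rceil=n$, so $f(\lfloor n/2\rfloor)=(2^{\lfloor n/2\rfloor}-1)(2^{\lceil n/2\rceil}-1)$, which is exactly the product in the claimed bound.

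Expanding gives
\[
f(k)=2^n-2^k-2^{n-k}+1,
\]
so maximizing $f$ is the same as minimizing $g(k)=2^k+2^{n-k}$ over the allowed range. Since $g(k)=g(n-k)$, we may assume $k\le n/2$. For an integer $k<\lfloor n/2\rfloor$ we have $n-k-1\ge k+1>k$, and hence
\[
g(k)-g(k+1)=2^{n-k-1}-2^{k}\ge 0.
\]
Thus $g$ is nonincreasing on the integers from $1$ up to $\lfloor n/2\rfloor$, so $g(k)\ge g(\lfloor n/2\rfloor)$ for all such $k$. By the symmetry $g(k)=g(n-k)$ the same inequality holds for $k\ge \lceil n/2\rceil$. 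An alternative route is AM--GM: $2^k+2^{n-k}$ is a sum with constant product $2^n$, so it is minimized when the two exponents are as close as possible. The discrete monotonicity argument above avoids any fuss about integrality, so I would use it.

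Substituting the minimal value of $g$ back in gives $f(k)\le(2^{\lfloor n/2\rfloor}-1)(2^{\lceil n/2\rceil}-1)$, and the corollary follows by combining this with Theorem~\ref{thm:main}. The only slightly delicate point is checking the sign of the step difference $g(k)-g(k+1)$ near the middle, especially for odd $n$. There, $\lfloor n/2\rfloor$ and $\lceil n/2\rceil$ give the same value of $g$ by symmetry, so both endpoints of the middle step attain the minimum. The rest is routine.
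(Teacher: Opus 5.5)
Your proposal is correct and follows essentially the same route as the paper: apply Theorem~\ref{thm:main}, then use a one-step comparison to show that $(2^{k}-1)(2^{n-k}-1)$ increases as the parts become more balanced. Your step difference $g(k)-g(k+1)=2^{n-k-1}-2^{k}$ is exactly the paper's quantity $2^{|B|-1}-2^{|A|}$ for moving one vertex from the larger part to the smaller one.
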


\begin{proof}
    By Theorem~\ref{thm:main}, it holds that
    $$
        \partial(G) + \partial(\bar{G})
        \leq
        2(2^{|A|} - 1)(2^{|B|} - 1) + 2.
    $$
    It remains to show that
    $$
        (2^{|A|} - 1)(2^{|B|} - 1)
        \leq
        (2^{\lfloor n/2 \rfloor} - 1)(2^{\lceil n/2 \rceil} - 1).
    $$
    
    \noindent Assume without loss of generality that $|A| \leq |B|$. If
    $|B|-|A|\geq 2$, then
    $$
        (2^{|A|+1}-1)(2^{|B|-1}-1)
        -
        (2^{|A|}-1)(2^{|B|}-1)
        =
        2^{|B|-1}-2^{|A|}
        >0.
    $$
    Thus, moving one vertex from the larger part to the smaller part
    increases the product. Repeating this process gives that the product
    is maximized when the two part sizes differ by at most one, that is, at
    $|A|=\lfloor n/2\rfloor$ and $|B|=\lceil n/2\rceil$. The corollary follows.
\end{proof}

\noindent The following corollary follows immediately from a result of Brouwer,
Csorba, and Schrijver~\cite{BrouwerCsorbaSchrijver}, which states that every nonempty graph has an odd number of dominating sets.
\begin{corollary}\label{cor:main}
    For a bipartite graph $G$ with nonempty bipartition $(A,B)$ and $n = |A| + |B|$, it holds that
    \begin{align*}
        \partial(G) \partial(\bar{G}) &\leq (2^{|A|} - 1)(2^{|B|} - 1)((2^{|A|} - 1)(2^{|B|} - 1) + 2) \\
        &\leq (2^{\lfloor n/2 \rfloor} - 1)(2^{\lceil n/2 \rceil} - 1)((2^{\lfloor n/2 \rfloor} - 1)(2^{\lceil n/2 \rceil} - 1) + 2)
    \end{align*}
\end{corollary}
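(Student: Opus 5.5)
The plan is to turn the sum bound of Theorem~\ref{thm:main} into a product bound, using the parity result of Brouwer, Csorba, and Schrijver to control how unevenly the sum can split between $\partial(G)$ and $\partial(\bar{G})$. Write $x=\partial(G)$, $y=\partial(\bar{G})$ and $M=(2^{|A|}-1)(2^{|B|}-1)$. Theorem~\ref{thm:main} gives $x+y\le 2M+2$. Both $G$ and $\bar G$ have the nonempty vertex set $A\cup B$, so by~\cite{BrouwerCsorbaSchrijver} both $x$ and $y$ are odd. Hence $x+y$ is even and $x+y \le 2M+2$.

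The first key step is an AM--GM argument that respects parity. For odd $x,y$ with even sum $s=x+y$, the product $xy$ is maximized when $x$ and $y$ are as close as possible. If $s/2$ is odd, that maximum is $(s/2)^2$. If $s/2$ is even, the best we can do is $x=s/2-1$, $y=s/2+1$, giving $(s/2)^2-1$. Since $xy$ increases as $s$ grows, we may take $s=2M+2$, so $s/2=M+1$. Here $M$ is odd, being a product of odd numbers, so $M+1$ is even. The largest product of two odd numbers summing to $2M+2$ is therefore $M(M+2)=(M+1)^2-1$. This is exactly the first claimed bound $(2^{|A|}-1)(2^{|B|}-1)\bigl((2^{|A|}-1)(2^{|B|}-1)+2\bigr)$.

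For a smaller even sum $s\le 2M$, plain AM--GM already gives $xy\le (s/2)^2\le M^2 < M(M+2)$. So the bound holds in every case.

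The second inequality follows from monotonicity. The function $t\mapsto t(t+2)$ is increasing for $t\ge 0$. Corollary~\ref{cor:conj}, in its proof, showed that $M\le (2^{\lfloor n/2\rfloor}-1)(2^{\lceil n/2\rceil}-1)$. Substituting this bound on $M$ into $t(t+2)$ gives the second line of the statement.

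The main obstacle is the parity step. Without the oddness of $x$ and $y$, AM--GM would only give $(M+1)^2$, which is one more than the claimed bound. So the proof must use~\cite{BrouwerCsorbaSchrijver} for both $G$ and $\bar G$, and must also note that $M$ is odd, so that $x=y=M+1$ is ruled out.
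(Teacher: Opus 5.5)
Your proposal is correct and follows the paper's proof. You combine the sum bound of Theorem~\ref{thm:main} with the Brouwer--Csorba--Schrijver parity result to get the product bound $M(M+2)$, then obtain the second line from the inequality shown in the proof of Corollary~\ref{cor:conj}, while also making explicit two points the paper leaves implicit: that $M$ is odd, so the split $(M+1,M+1)$ is impossible, and that sums below $2M+2$ give a smaller product.
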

\begin{proof}
    Let $q = (2^{|A|} - 1)(2^{|B|} - 1)$. Theorem~\ref{thm:main} gives us that $\partial(G) + \partial(\bar{G}) \leq 2q + 2$.
    Since $\partial(G)$ and $\partial(\bar{G})$ are odd
    and since their sum is at most $2q+2$, their product is maximized when their values are $q$ and $q+2$.
    The second inequality follows by the same argument as in the proof of
    Corollary~\ref{cor:conj}.
\end{proof}

\noindent The first inequality is tight since for $G = K_{|A|,|B|}$ for some nonempty bipartition $(A,B)$,
we get $\partial(K_{|A|,|B|}) = (2^{|A|} - 1)(2^{|B|} - 1) + 2$ and $\partial(\overline{K_{|A|,|B|}}) = (2^{|A|} - 1)(2^{|B|} - 1)$.
The second inequality is tight with $G = K_{|A|,|B|}$ for some nonempty bipartition $(A,B)$ where $|A|$ and $|B|$ differ by at most one.

\section{Extremal Bipartite Graphs}

We saw that the bound in Theorem~\ref{thm:main} is tight for every choice of nonempty part sizes, since it is attained by the complete bipartite graph $K_{|A|,|B|}$. In fact, we can characterize all bipartite graphs that attain equality.

\begin{theorem}\label{thm:equality}
    For a bipartite graph $G$ with nonempty bipartition $(A, B)$, it holds that
    $$
        \partial(G) + \partial(\bar{G}) = 2(2^{|A|} - 1)(2^{|B|} - 1) + 2,
    $$
    if and only if one of the following is satisfied:
    \begin{enumerate}[
    label={\makebox[2em][c]{\normalfont(\roman*)}},
    labelwidth=2em,
]
    \item $\min\{ |A|, |B| \} = 1$.
    \item $|A| = |B| = 2$ and $G \cong 2K_2$.
    \item $G \cong K_{|A|,|B|}$.
    \end{enumerate}
\end{theorem}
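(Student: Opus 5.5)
We prove the characterization by reducing equality to $|\Upsilon(G,\bar G)| = |\mathcal{P}|$ and then reading off what tightness in Lemma~\ref{lem:main} forces. By (\ref{ineq:one}) and the computation at the end of the proof of Theorem~\ref{thm:main}, equality holds exactly when $|\Upsilon(G,\bar{G})| = (2^{|A|}-2)(2^{|B|}-2)$.

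\textbf{Case $\min\{|A|,|B|\}=1$.} The proof of Theorem~\ref{thm:main} shows $\Upsilon(G,\bar G)=\emptyset$, and the right-hand side is $0$. So equality always holds, which gives (i).

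\textbf{Case $|A|,|B|\ge 2$: reduction.} By (\ref{ineq:decomposition}), equality is equivalent to $|\mathcal{P}|-|\mathcal{I}| = |\mathcal{L}|+|\mathcal{R}|$. The proof of Lemma~\ref{lem:main} gives the chain
\[
|\mathcal{P}|-|\mathcal{I}| \ \ge\ \max\{|B||\mathcal{L}|,\ |A||\mathcal{R}|\} \ \ge\ |\mathcal{L}|+|\mathcal{R}|,
\]
so under equality both inequalities are equalities. We analyse the second one.

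\textbf{Case $|A|,|B|\ge 2$: only $\mathcal{L}$ nonempty.} Suppose $|\mathcal{L}|>0$ and $|\mathcal{R}|=0$. Then the max equals $|B||\mathcal{L}|$, and $|B||\mathcal{L}| > |\mathcal{L}| = |\mathcal{L}|+|\mathcal{R}|$. This contradicts equality, and the situation with only $\mathcal{R}$ nonempty is symmetric.

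\textbf{Case $|A|,|B|\ge 2$: $\mathcal{L}$ and $\mathcal{R}$ both nonempty.} Equality gives $|\mathcal{L}|(|B|-1)\le|\mathcal{R}|$ and $|\mathcal{R}|(|A|-1)\le|\mathcal{L}|$. Multiplying as in the lemma yields $(|A|-1)(|B|-1)\le 1$, so $|A|=|B|=2$. Substituting back gives $|\mathcal{L}|=|\mathcal{R}|$.

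\textbf{Case $|A|,|B|\ge 2$: $\mathcal{L}=\mathcal{R}=\emptyset$.} Equality then says $\mathcal{I}=\mathcal{P}$, i.e. every pair in $\mathcal{P}$ dominates $G$. For any $a\in A$ and $b\in B$, the pair $(\{a\},B\setminus\{b\})$ lies in $\mathcal{P}$, since $|A|,|B|\ge2$. In this pair $b$ can only be dominated by $a$, so $ab\in E(G)$. Hence $G\cong K_{|A|,|B|}$.

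\textbf{Finite check for $|A|=|B|=2$.} Here $|\mathcal{P}|=4$, and we go through the bipartite graphs on parts of size $2$ up to isomorphism: empty, one edge, a path with two edges, $2K_2$, $P_4$, and $C_4=K_{2,2}$. For each we compute $|\Upsilon(G,\bar G)|=|\mathcal{I}|+|\mathcal{L}|+|\mathcal{R}|$.

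For $2K_2$ with edges $a_1b_1,a_2b_2$, only $(\{a_1\},\{b_2\})$ and $(\{a_2\},\{b_1\})$ dominate, so $|\mathcal{I}|=2$. Also $\mathcal{L}=\mathcal{R}=\{\emptyset\}$: $B$ dominates $G$, and $A$ dominates $\bar G$ because $\bar G$ contains $a_2b_1$ and $a_1b_2$. The total is $4$, so equality holds, giving (ii).

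For the remaining graphs other than $K_{2,2}$, we expect to show $|\Upsilon|<4$, either by direct enumeration or via the constraints already derived:
\begin{itemize}[label={}]
\item a graph with an isolated vertex forces either $\mathcal{I}$ small or $\mathcal{L},\mathcal{R}$ empty;
\item for $P_4$ one checks directly that $\mathcal{L}$ or $\mathcal{R}$ is empty, and then the second case above applies.
\end{itemize}

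\textbf{Sufficiency.} For (iii), in $K_{|A|,|B|}$ every pair in $\mathcal{P}$ dominates, so $\mathcal{I}=\mathcal{P}$. Moreover $\mathcal{L}=\emptyset$: for $L\subsetneq A$, the set $A\setminus L$ has no $\bar G$-neighbours in $B$, so it does not dominate $\bar G$. Symmetrically $\mathcal{R}=\emptyset$, and equality follows. Cases (i) and (ii) were handled above.

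The main obstacle is the small case $|A|=|B|=2$. It needs a careful but finite enumeration, and we must make sure that no graph other than $2K_2$ and $K_{2,2}$ slips through there.
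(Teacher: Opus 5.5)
Your reduction to $|\mathcal{P}|-|\mathcal{I}|=|\mathcal{L}|+|\mathcal{R}|$ is correct and follows the paper closely, as are your exclusion of the case where exactly one of $\mathcal{L},\mathcal{R}$ is nonempty, the derivation of $|A|=|B|=2$ when both are nonempty, the $K_{|A|,|B|}$ argument, and the sufficiency checks. The gap is the necessity direction in the remaining case $|A|=|B|=2$ with $\mathcal{L},\mathcal{R}\neq\emptyset$. There you only verify that $2K_2$ attains equality. For the empty graph, a single edge, $P_3\cup K_1$ and $P_4$, you say you ``expect to show'' $|\Upsilon(G,\bar G)|<4$, so this case is not proved. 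The hints you give are not an argument either. A bound like ``$\mathcal{I}$ small'' is useless without an upper bound on $|\mathcal{L}|+|\mathcal{R}|$, and a priori each of $\mathcal{L},\mathcal{R}$ could have three members. So every check would first need to control which subsets can lie in $\mathcal{L}$ and $\mathcal{R}$.

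The missing idea is already in the injectivity step of Lemma~\ref{lem:main}. For every $b\in B$ we have $B\setminus\{b\}\notin\mathcal{R}$. Indeed, $A\cup(B\setminus\{b\})\in\Upsilon(G,\bar G)$ would make $\{b\}$ dominate $\bar G$, so $b$ has no $G$-neighbour in $A$. But it would also make $A\cup(B\setminus\{b\})$ dominate $G$, so $b$ has a $G$-neighbour in $A$.

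When $|B|=2$, the sets $B\setminus\{b\}$ are exactly the singletons, so $\mathcal{R}\subseteq\{\emptyset\}$, and symmetrically $\mathcal{L}\subseteq\{\emptyset\}$. Nonemptiness then gives $A,B\in\Upsilon(G,\bar G)$. Since $A$ and $B$ dominate $G$, every vertex has $G$-degree at least one. Since $A$ and $B$ dominate $\bar G$, every vertex has a $G$-non-neighbour on the other side, hence $G$-degree at most one. Therefore $G\cong 2K_2$.

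If you prefer to finish by your enumeration, the same singleton observation makes it quick. An isolated vertex in $A$ stops $B$ from dominating $G$, which forces $\mathcal{L}=\emptyset$, and your earlier cases then finish. For $P_4$ one gets $|\mathcal{I}|=2$ and $\mathcal{L}=\mathcal{R}=\emptyset$, because a vertex of $G$-degree two on each side blocks $\emptyset$ from $\mathcal{L}$ and from $\mathcal{R}$. Without that observation, though, the case is left open.
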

\begin{proof}
    We use the same notation as in Lemma~\ref{lem:main}.
    Suppose first that equality holds. If
$\min\{|A|,|B|\}=1$, then condition (i) holds. So assume that
    $|A|,|B|\geq 2$. By \eqref{ineq:decomposition}, we have
$$
|\Upsilon(G,\bar{G})|
=
|\mathcal{P}|-\bigl(|\mathcal{P}|-|\mathcal{I}|\bigr)
+|\mathcal{L}|+|\mathcal{R}|.
$$
Therefore, equality in Theorem~\ref{thm:main} holds if and only if
$$
|\mathcal{P}|-|\mathcal{I}|
=
|\mathcal{L}|+|\mathcal{R}|.
$$

\noindent From the proof of Lemma~\ref{lem:main}, we have
$$
|\mathcal{P}|-|\mathcal{I}|
\geq |A||\mathcal{R}|
\ \ \text{and}\ \
|\mathcal{P}|-|\mathcal{I}|
\geq |B||\mathcal{L}|.
$$

\noindent Suppose first that $|\mathcal{L}|=0$ or $|\mathcal{R}|=0$. Assume wlog. that $|\mathcal{L}|=0$. Then we have
$|\mathcal{P}|-|\mathcal{I}|=|\mathcal{R}|$. Since
$|\mathcal{P}|-|\mathcal{I}| \geq |A||\mathcal{R}|$
and $|A|\geq2$, it follows that $|\mathcal{R}|=0$, and therefore
$|\mathcal{P}|-|\mathcal{I}|=0.$
Thus, we have
$$
\mathcal{I}=\mathcal{P}.
$$

\noindent We claim that $G\cong K_{|A|,|B|}$. Fix any $a\in A$ and $b\in B$.
Since $|A|\geq2$, choose some $a'\in A\setminus \{a\}$. The pair
$(\{a'\},\{b\})$ belongs to $\mathcal{P}$, and thus belongs to
$\mathcal{I}$. Thus $\{a',b\}$ dominates $G$. Since $a$ does not
belong to this set and there are no edges within $A$, it follows that
$a$ must be adjacent to $b$. Since $a$ and $b$ were arbitrary, every
vertex of $A$ is adjacent to every vertex of $B$. Therefore, it follows that
$$
G\cong K_{|A|,|B|}.
$$

\noindent Now suppose that $|\mathcal{L}|,|\mathcal{R}|>0$. Since
$|\mathcal{P}|-|\mathcal{I}| = |\mathcal{L}|+|\mathcal{R}|$,
it follows that the inequalities
$$
|\mathcal{P}|-|\mathcal{I}|
\geq |A||\mathcal{R}|
\ \ \text{and}\ \ 
|\mathcal{P}|-|\mathcal{I}|
\geq |B||\mathcal{L}|
$$
give us
$$
|\mathcal{L}|
\geq (|A|-1)|\mathcal{R}|
\ \ \text{and}\ \ 
|\mathcal{R}|
\geq (|B|-1)|\mathcal{L}|.
$$

\noindent Now, multiplying these inequalities gives
$$
|\mathcal{L}||\mathcal{R}|
\geq
(|A|-1)(|B|-1)|\mathcal{L}||\mathcal{R}|.
$$

\noindent Since $|\mathcal{L}|,|\mathcal{R}|>0$, it follows that
$(|A|-1)(|B|-1)\leq 1.$ Since $|A|,|B|\geq2$, we must have
$$
|A|=|B|=2.
$$

\noindent We now show that $G\cong 2K_2$. We write $A=\{a_1,a_2\}$ and $B = \{b_1,b_2\}$. We first claim that no singleton belongs to $\mathcal{R}$. Suppose,
for example, that $\{b_1\}\in\mathcal{R}$. Then we have
$$
A\cup\{b_1\}\in\Upsilon(G,\bar{G}),
$$
so its complement $\{b_2\}$ dominates $\bar{G}$. Thus $b_2$ is
adjacent in $\bar{G}$ to every vertex of $A$, and thus $b_2$ has no
neighbor in $A$ in $G$. However, $A\cup\{b_1\}$ dominates $G$, so $b_2$ must have a neighbor in $A$. Contradiction! Therefore, no singleton belongs to $\mathcal{R}$. Since
$\mathcal{R}\neq\emptyset$, it follows that
$\mathcal{R}=\{\emptyset\}$.
By symmetry, we also have $\mathcal{L}=\{\emptyset\}$.

\noindent Thus both $A$ and $B$ belong to $\Upsilon(G,\bar{G})$. Since $A$
dominates $G$, every vertex of $B$ has at least one neighbor in $A$.
Since $B$ dominates $\bar{G}$, every vertex of $A$ has at least one
nonneighbor in $B$, and thus has degree at most one in $G$.
Similarly, since $B$ dominates $G$, every vertex of $A$ has at least
one neighbor in $B$, while the fact that $A$ dominates $\bar{G}$
implies that every vertex of $B$ has degree at most one in $G$.
Therefore, every vertex of $G$ has degree exactly one, and thus we get
$$
G\cong 2K_2.
$$

\noindent Conversely, suppose that one of conditions (i), (ii), (iii) holds. If $\min\{|A|,|B|\}=1$, then as shown in the proof of Theorem~\ref{thm:main}, we get
$$
\Upsilon(G,\bar{G})=\emptyset.
$$
And thus, we get
$$
\partial(G)+\partial(\bar{G})
=
2^{|A|+|B|}
=
2(2^{|A|}-1)(2^{|B|}-1)+2.
$$

\noindent If $G\cong K_{|A|,|B|}$, then a dominating set of $G$ either
intersects both $A$ and $B$, or is equal to $A$ or $B$. Thus
$$
\partial(G)
=
(2^{|A|}-1)(2^{|B|}-1)+2.
$$
Moreover, it follows that $\bar{G}\cong K_{|A|}\cup K_{|B|}$,
so a dominating set of $\bar{G}$ must intersect both components. So, we have that
$$
\partial(\bar{G})
=
(2^{|A|}-1)(2^{|B|}-1),
$$
and equality follows. Finally, suppose that $G\cong2K_2$. Then
$ \partial(G)=9$ and $\partial(\bar{G})=11$. Since $|A|=|B|=2$, we have
$$
\partial(G)+\partial(\bar{G})
=
20
=
2(2^2-1)(2^2-1)+2.
$$
\end{proof}

\section{Conclusion}
In this paper, we partially resolve the conjecture of Keough and Shane by proving it for all bipartite graphs. In fact, we establish a stronger bound in terms of the sizes of the two parts of a bipartition and characterize the bipartite graphs for which equality holds. It remains open whether the conjectured bound holds for all graphs.

\section*{Acknowledgements}

I would like to thank Lauren Keough for reading an earlier version of this paper and for her kind feedback and suggestions.

\bibliographystyle{unsrt}
\bibliography{references}
\end{document}